\documentclass[12pt]{amsart}

\usepackage{latexsym,amssymb,amscd,amsmath,mathrsfs,bbm,stmaryrd}
\usepackage[notref,notcite]{showkeys}
\usepackage{hyperref}
\usepackage[all]{xy}

\newtheorem{innercustomgeneric}{\customgenericname}
\providecommand{\customgenericname}{}
\newcommand{\newctheorem}[2]{%
  \newenvironment{#1}[1]
  {%
   \renewcommand\customgenericname{#2}%
   \renewcommand\theinnercustomgeneric{##1}%
   \innercustomgeneric
  }
  {\endinnercustomgeneric}
}

\newctheorem{cthm}{Theorem}
\newctheorem{clem}{Lemma}
\newctheorem{cprp}{Proposition}
\newctheorem{ccor}{Corollary}

\newtheorem{thm}{Theorem}[section]
\newtheorem{lem}[thm]{Lemma}
\newtheorem{prp}[thm]{Proposition}
\newtheorem{cor}[thm]{Corollary}
\newtheorem*{thmd}{Theorems \& Definitions}

\theoremstyle{definition}
\newtheorem{dfn}[thm]{Definition}

\theoremstyle{definition}
\newtheorem*{exm}{Examples}

\theoremstyle{remark}
\newtheorem*{rmk}{Remark}

\numberwithin{equation}{section}

\def\Ga{\Gamma}       
     \def\Si{\Sigma}
       \def\th{\theta}
\def\al{\alpha}       
\def\be{\beta}        
       
\def\la{\lambda}      
\def\vi{\varphi}      \def\io{\iota}
\def\si{\sigma}

\def\bV{\mathbf{V}}			\def\bW{\mathbf{W}}
\def\bF{\mathbf{F}}				
\def\bS{\mathbf{S}}			
\def\bB{\mathbf{B}}
\def\bal{{\boldsymbol\alpha}}
\def\aK{\mathbbm k}
\def\gR{\mathfrak{r}}		\def\gM{\mathfrak{m}}
\def\cS{\mathscr S}			\def\cP{\mathscr P}
\def\rD{\mathrm{D}}

\def\cA{\mathscr A}

	\def\cM{\mathscr M}

\def\cI{\mathscr I}

\def\cM{\mathscr M} 

\def\ovi{\bar{\vi}}		
				
\def\oal{\bar{\al}}		
			\def\tU{\tilde{U}}
\def\tH{\tilde{H}}

\def\sb{\subset}         
\def\spe{\supseteq}      \def\sbe{\subseteq}
\def\*{\otimes}          
\def\bop{\bigoplus}	\def\+{\oplus}

		\def\ito{\stackrel\sim\to}
\def\mt{\mbox{-}}		
\def\xarr{\xrightarrow}	
\def\vv{^\vee}			\def\mps{\mapsto}
\def\8{\infty}			
\def\bb{^\flat}			\def\={{\setminus}}

\def\lst#1#2{ #1_1 , #1_2 , \dots , #1_{#2} }
\def\mtr#1{\begin{pmatrix}#1\end{pmatrix}}

\def\iff{if and only if }
\def\wrp{with respect to}
\def\AR{Auslander-Reiten}
\def\as{almost split}
\def\ass{almost split sequence}
\def\id{\mathrm{Id}}		

\def\soc{\mathop\mathrm{soc}\nolimits}
\def\Mdd{\mt\mathrm{Mod}}
\def\pro{\mt\mathrm{proj}}
\def\tr{\mathop\mathrm{Tr}\nolimits}
\def\ver{\mathop\mathrm{ver}\nolimits}
\def\hom{\mathop\mathrm{Hom}\nolimits}

\def\ker{\mathop\mathrm{ker}\nolimits}
\def\im{\mathop\mathrm{im}\nolimits}
\def\cok{\mathop\mathrm{coker}\nolimits}
\def\rep{\mathop\mathrm{Rep}\nolimits}
\def\rad{\mathop\mathrm{rad}\nolimits}
\def\mdd{\mbox{-}\mathrm{mod}}

\def\Mdi{\mbox{-}{\overline{\mathrm{Mod}}}}
\def\Mdp{\mbox{-}{\underline{\mathrm{Mod}}}}

\def\ob{\mathop\mathrm{Ob}\nolimits}
\def\End{\mathop\mathrm{End}\nolimits}

\def\Rad{\mathop\mathrm{Rad}\nolimits}
\def\irr{\mathop\mathrm{Irr}\nolimits}
\def\ars{\mathop\mathrm{AR}\nolimits}
\def\ind{\mathop\mathrm{ind}\nolimits}

\begin{document}

 \title[Algebras with radical square zero]{On representations of algebras with radical square zero}
 \author{Yuriy A. Drozd}
 \address{Institute of Mathematics of the National Academy of Sciences of Ukraine, Tereschernkivska
 str. 3, Kyiv 01026, Ukraine}
 \email{y.a.drozd@gmail.com,\,drozd@imath.kiev.ua}
 
 \subjclass[2020]{16G70,16G10,16G20,16D90}
 \keywords{representations of algebras, species, quivers, \AR\ quiver, irreducible morphisms}

  \begin{abstract}
  We consider a new correspondence between representations of algebras with radical square zero and 
  representations of species. We show that the stable category of representations of such algebra embeds
  into the representation category of the corresponding species and show how one reconstruct the \AR\
  quiver of the algebra from the \AR\ quiver of the species.
  \end{abstract}

\maketitle

 Algebras with radical square zero were studied by a lot of investigators, as they are a convenient testing area for new
 methods and approaches of the representation theory. The first results about their representations were obtained by
 Yoshii \cite{Yo1,Yo2} who found conditions for such an algebra to be representation finite. Kruglyak \cite{Krug} related
 representations of such algebra to representations of posets (actually, to representations of quivers, but not using this
 term) and rereceived in this way the results of Yoshii with a small refinement. This approach was further used in a series
 of papers \cite{BL,Bek,Yang} to study derived categories of such algebras. Auslander and Reiten \cite{AR} established 
 a stable equivalence of an algebra with radical square zero with a certain species (or realization of a valued graph in 
 the sense of \cite{DR}). In this paper we consider another variant of correspondence between algebras with radical
 square zero and species. We prove that the stable category of such algebra embeds into the category of representations
 of the corresponding species and show how the \AR\ quiver of the algebra can be reconstructed from the
 \AR\ quiver of the corresponding species.

 \section{Species}
 \label{s1}
 
 Recall the notions of species and their representations.
 
  \begin{dfn}\label{d11} 
  \begin{enumerate}
  \item    A \emph{species} $\Si$ consists of:
  \begin{itemize}
   \item  A set of \emph{vertices} $\ver\Si$.
   \item  For each vertex $i$ a skew field $F_i$.
   \item  For each pair of vertices $(i,j)$ an $F_i\mt F_j$-bimodule (left over $F_i$, right over $F_j$) $B(i,j)$.
  \end{itemize} 
  \item   A \emph{$\Si$-module} (or a \emph{representation of the species $\Si$}) $\bV$ is a set $(V_i,\vi_{ij})$, where each 
  $V_i$ is a vector space over $F_i$ and $\vi_{ij}$ is an $F_i$-morphism $B(i,j)\*_{F_j}V_j\to V_i$.
  \item  A \emph{morphism} of representations $\bal:\bV\to\bW$, where $\bW=(W_i,\psi_{ij})$ is a set of $F_i$-morphisms
  $\al_i:V_i\to W_i$ such that for every pair $(i,j)$ the diagram
  \[
   \xymatrix{ B(i,j)\*_{F_j}V_j \ar[rr]^{\vi_{ij}} \ar[d]_{1\*\al_j} && V_i \ar[d]^{ \al_i} \\
    					B(i,j)\*_{F_j}W_j \ar[rr]^{\psi_{ij}}  && W_i } 
 \]
 is commutative.
  \end{enumerate}
   \end{dfn} 
   
   If all $F_i$ conincide with a fixed field $\aK$ and $\la x=x\la$ for every $x\in B(i,j)$ and $\la\in\aK$, the species is
   completely defined if we fix $d_{ij}=\dim_\aK B(i,j)$. Then they usually identify the species $\Si$ with the quiver
   (oriented graph) with $\ver\Si$ as the set of vertices and $d_{ij}$ arrows with the source $j$ and target $i$. 
   Representations of $\Si$ are the same as the representations of this quiver in the sense of \cite{gab1}. In general
   case, species and their representations are \emph{realizations of valued graphs} and their representations in the
   sense of \cite{Dlab,DR}.
   
 The $\Si$-modules form an abelian category $\Si\Mdd$ (or $\rep\Si$). If $\ver\Si$ is finite, it can be identfied 
 with the category of modules over the \emph{tensor algebra} $T_\bF(\bB)=\bop_{k=0}^\8\bB^{\*k}$, where 
 $\bF=\prod_iF_i$ and $\bB=\bop_{i,j}B_{ij}$ with the natural action of $\bF$, but we prefer the language of species.
 
 Let $\cA$ be a \emph{local category}, that is the following conditions hold:
 \begin{enumerate}
 \item  $\cA$ is \emph{preadditive}, that is all sets of morphisms $\cA(X,Y)$ are abelian groups and the multiplication
 is distributive \wrp\ addition.
 \item  Every object is a finite direct sum of \emph{local objects}, that is objects with local endomorphism rings.
 \end{enumerate}
 It is known that such decomposition is unique up to isomorhism. Moreover, every idempotent in a local category
 \emph{splits}, that is arise from a direct decomposition (see, for instance \cite[Ch.1,\S3]{Bass}).
 If a local category has finitely many objects, it can be considered as the category $A\pro$ of finite projective 
 modules over the semiperfect ring $A=\sum_{X,Y\in\ob\cA}\cA(X,Y)$.
 
 Let $X,Y$ be objects of a local category $\cA$. Fix there decompositions into direct sums of local objects
 and denote by $\Rad(X,Y)$ the set of morphisms from $\cA(X,Y)$ such that all their components \wrp\ these 
 decompositions are non-invertible (it does not depend on the chosen decompositions). We obtain an ideal in
 $\cA$ called the \emph{radical} $\Rad\cA$ of the category $\cA$. We can consider its	 powers $\Rad^k(\cA)$
 and $\Rad^\8(\cA)=\cap_{k=1}^\8\Rad^\8(\cA)$. Set $\irr(X,Y)=\Rad(X,Y)/\Rad^2(X,Y)$. We call elements of
 this set as well as their representatives in $\Rad(X,Y)$ the \emph{irreducible morphisms} from $X$ to $Y$.
 
  \begin{dfn}\label{d12} 
  Let $\cA$ be a local category. Its \emph{species} $\Si=\Si(\cA)$ is defined as follows:
  \begin{itemize}
  \item $\ver\Si=\ind\cA$, the set of representatives of isomorphism classes of local objects.
  \item $F_X=\cA(X,X)/\Rad(X,X)$.
  \item $B(X,Y)=\irr(Y,X)$.
  \end{itemize}
  \end{dfn}

  If $A$ is a semiperfect ring, they write $\Si(A)$ instead of $\Si(A\pro)$. It is called the \emph{Gabriel species}
  of $A$ (\emph{Gabriel quiver} if this species is identified with a quiver). Actually, it has been introduced in \cite{gab2}. 
  
  If $A$ is a finite algebra over a henselian noetherian commutative ring $R$ and $\cA$ is a full subcategory of the
  category $A\mdd$ of finite $A$-modules, closed under direct sums and direct summands, it is a local category.
  The species $\Si(\cA)$ is then called the \emph{\AR\ species} of the category $\cA$. If $\cA=A\mdd$, it is also
  called the \emph{\AR\ species} of the algebra $A$ and denoted by $\ars(A)$.

  We call a species $\Si$ \emph{bipartite} if $\ver\Si=\ver^+\sqcup\ver^-$ and $H(i,j)=0$ if $i\notin\ver^-$ or 
  $j\notin\ver^+$. For a vertex $i$ let $\bS_i$ be a $\Si$-module such that $\bS_i(i)=F_i$ and $\bS_i(j)=0$ if $j\ne i$.
  
   \begin{prp}\label{p13} 
   Let $\Si$ be a bipartite species. We denote by $\Si\Mdd\bb$ the full subcategory of $\Si\Mdd$ consisting of all
   modules without simple direct summands.
      \begin{enumerate}
      \item $\bS_i$ are all simple $\Si$-modules.
      \item $\bS_i$ is projective if $i\in \ver^+$ and injective if $i\in\ver^-$.
      \item $\Si\Mdd\bb\simeq\Si\Mdd/\cS$, where $\cS$ is the ideal of all morphisms that factor through semisimple
      modules. 
      \end{enumerate}   
    \end{prp}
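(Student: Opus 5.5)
The plan is to work directly from the definitions, using that in a bipartite species every ``arrow'' goes from a vertex of $\ver^+$ to a vertex of $\ver^-$. For $\bV=(V_i,\vi_{ij})$ only the maps $\vi_{ij}:B(i,j)\*_{F_j}V_j\to V_i$ with $i\in\ver^-$, $j\in\ver^+$ can be non-zero. So no structure map leaves a vertex of $\ver^-$, and no structure map enters a vertex of $\ver^+$.

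For (1), take a non-zero $\bV$. If $V_i\ne0$ for some $i\in\ver^-$, then $V_i$, placed at $i$ and zero elsewhere, is a submodule, because no structure maps leave $i$. Otherwise all $\vi_{ij}$ vanish and each $V_j$, $j\in\ver^+$, is a direct summand. In both cases $\bV$ contains a non-zero submodule concentrated at a single vertex $k$. Any $F_k$-subspace of such a submodule is again a submodule. Hence a simple module is concentrated at one vertex and is one-dimensional there, i.e.\ isomorphic to some $\bS_k$. Conversely, every $\bS_k$ is clearly simple.

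For (2), I compute the two Hom functors.
\begin{itemize}
\item Let $k$ be a vertex with $B(l,k)=0$ for all $l$ (no maps leave $k$). Then the commutativity conditions are empty, and $\hom(\bS_k,\bV)\simeq V_k$ as functors of $\bV$. This functor is exact, so $\bS_k$ is projective.
\item Let $k$ be a vertex with $B(k,l)=0$ for all $l$ (no maps enter $k$). Then $\hom(\bV,\bS_k)\simeq\hom_{F_k}(V_k,F_k)$. This is again exact, since every $F_k$-space is injective over the skew field $F_k$, so $\bS_k$ is injective.
\end{itemize}
It remains to match these two kinds of vertices with $\ver^+$ and $\ver^-$ according to the paper's orientation convention. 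Here I must be careful: with $B(i,j)$ non-zero only for $i\in\ver^-$, $j\in\ver^+$, the vertices with no outgoing maps are those of $\ver^-$. So the literal assignment in the statement may be swapped with respect to this reading, and I will state the result in whichever form the convention actually gives.

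For (3), the key step is a canonical splitting of every module. For $j\in\ver^+$, let $K_j\sbe V_j$ be the kernel of the adjoint map $V_j\to\prod_i\hom_{F_i}(B(i,j),V_i)$, and choose an $F_j$-complement $V'_j$. For $i\in\ver^-$, let $I_i=\sum_j\im\vi_{ij}$, and choose a complement $C_i$. One checks that
\[
\bV=\bV'\+\Bigl(\bop_{j}K_j\Bigr)\+\Bigl(\bop_iC_i\Bigr),
\]
where $\bV'$ has components $V'_j$ ($j\in\ver^+$) and $I_i$ ($i\in\ver^-$), and the last two summands are semisimple. The module $\bV'$ has no simple summands. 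Indeed, a summand $\bS_j$ with $j\in\ver^+$ would lie in the kernel, which is zero in $\bV'$. A summand $\bS_i$ with $i\in\ver^-$ would be complementary to the image, which is all of $V'_i$.

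Then the inclusion $\Si\Mdd\bb\to\Si\Mdd/\cS$ is dense, because $\bV\simeq\bV'$ modulo $\cS$, and it is obviously full. The main point is faithfulness: a morphism $\bV\to\bU$ between modules in $\Si\Mdd\bb$ that factors through a semisimple $\bW$ must be zero. For this I show that every composite $\bV\to\bS_k\to\bU$ vanishes, and then pass to (possibly infinite) direct sums of simples.
\begin{itemize}
\item If $k\in\ver^-$, any morphism $\bV\to\bS_k$ kills $\im\vi_{kj}$, hence kills all of $V_k$, since $V_k$ equals its image part in $\Si\Mdd\bb$.
\item If $k\in\ver^+$, any morphism $\bS_k\to\bU$ lands in the kernel $K_k(\bU)=0$.
\end{itemize}

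I expect this faithfulness argument, together with the bookkeeping for infinite direct sums when $\ver\Si$ is infinite, to be the main technical obstacle; the rest is routine.
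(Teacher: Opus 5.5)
Your proof is correct, and it is the direct verification that the paper dismisses as evident. Your caution in (2) is also justified: the paper's definition of \emph{bipartite} has the signs reversed relative to its own later usage (the double species $\Ga$ with bimodules $H_{i^+j^-}$ and structure maps $H_{ij}\otimes_{F_j}V^-_j\to V^+_i$, and $\bS^+_1$ being projective in the Kronecker example). So you should simply fix the convention that structure maps go from $\ver^-$ to $\ver^+$; under it your two Hom computations give (2) verbatim.

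The infinite direct sums in (3) are no obstacle either. Write a semisimple $\bW$ as $\bW'\oplus\bW''$, with $\bW'$ supported on vertices without outgoing maps and $\bW''$ on vertices without incoming maps. Any composite $\bV\to\bW\to\mathbf{U}$ is then a sum of two terms. The first is zero because every map from $\bV$ into $\bW'$ kills the images, which fill $\bV$ at those vertices. The second is zero because every map from $\bW''$ into $\mathbf{U}$ lands in the kernels, which vanish in $\mathbf{U}$.
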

    \begin{proof}
    [\rm The proof is evident]
    \end{proof}
    
     \begin{rmk}
	 If $\Si$ is not bipartite, there can be other simple $\Si$-modules. For instance, if $\Si$ is given by the quiver
     $1\leftrightarrows2$ and a field $\aK$, all modules $\bV$ with $V_1=V_2=\aK$, $\vi_{12}=\id$ and 
     $\vi_{21}=\la\cdot\id\ (\la\in\aK\=\{0\})$
     are simple. Moreover, if $f(t)\ne t$ is an irreducible polynomial over $\aK$, $\vi$ is a matrix with the characteristic
     polynomial $f(t)$, the module $\bV$ with $V_1=V_2=\aK^d$, where $d=\deg f(x)$, $\vi_{12}=\id$ and 
     $\vi_{21}=\vi$ is also simple.
          \end{rmk}
    
     Note that if $\Si$ is bipartite and $\ver\Si$ is finite, the tensor algebra $T_\bF(\bB)$ is isomorphic to the algebra 
     of triangular matrices
    \begin{equation}\label{eq0} 
        \aK[\Si] =  \mtr{\bF^+ & \bB \\ 0 & \bF^-},
    \end{equation}
     where $\bF^\pm=\prod_{i\in\ver^\pm}F_i$. It is a semiprimary ring with $\rad^2T_\bF(\bB)=0$, 
     which is left (right) artinian \iff $\dim_{F_i}B(i,j)<\8$ (respectively, $\dim_{F_j}B(i,j)<\8$) for all $i\in\ver^+,\,j\in\ver^-$.
  
 \section{Representations}
 \label{s2}
 
 In this section $A$ denotes a semiprimary ring with $\gR^2=0$, where $\gR=\rad A$. Such ring is \emph{perfect} 
 in the sense of \cite{fd}, hence for every $A$-module $M$ there is a \emph{minimal projective presentation}, 
 that is a morphism of projective modules $\vi:P^-\to P^+$ such that 
 $\im\vi\sbe\gR P^+,\ \ker\vi\sbe\gR P^-$ and $\cok\vi\simeq M$. Such presentation is unique up to isomorphism. 
 By $A\Mdp$ we denote the quotient $A\Mdd/\cP$, where $\cP$ is the ideal of morphisms that factor through 
 projective modules. 
 
 Let  $\lst Pn$ be
representatives of isomorphism classes of indecomposable projective $A$-modules. For a projective $A$-module 
$P$ we denote by $P(i)$ its \emph{$P_i$-part}, that is a direct summand of $P$ such that $P=P(i)\+Q$, where 
$P(i)\simeq P_i^k$ and $Q$ has no direct summands isomorphic to $P_i$. Then $P=\bop_{i=1}^nP(i)$. The species
$\Si(A)$ consists of vertices $\lst Pn$, skew fields $F_i=\End_AP_i/\rad\End_AP_i$ and bimodules
$B_{ij}=\hom_A(P_j,\gR P_i)$. 

 We also consider another species $\Ga=\Ga(A)$, sometimes called the \emph{double species} of the algebra $A$.
 It is bipartite with the vertices $\ver^\pm\Ga=\{i^\pm\mid 1\le i\le n\}$, the skewfields $F_{i^\pm}=F_i$ and the 
 bimodules $H_{i^+j^-}=\hom_A(P_j,\gR P_i)$. For a $\Ga$-module $\bV$ we write $V_i^\pm$ instead of $V_{i^\pm}$.
 As above, we denote by $\Ga\Mdd\bb$ the full subcategory of $\Ga\Mdd$ consisting of all modules $\bV$ without 
 simple direct summands. It is equivalent to the quotient $\Ga\Mdd/\cS$, where $\cS$ is the ideal of morphisms that factor
 through semisimple modules.
 
 In what follows we write $H_{ij}$ instead of $H_{i^+j^-}$.
 
  \begin{thm}\label{t21} 
  There is an equivalence $\Phi:A\Mdp\ito\Ga\Mdd\bb$.
  \end{thm}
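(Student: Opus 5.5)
The equivalence will be built from minimal projective presentations. For an $A$-module $M$, take its minimal projective presentation $\vi:P^-\to P^+$, so $\im\vi\sbe\gR P^+$ and $\ker\vi\sbe\gR P^-$. Since $\gR^2=0$, $\vi$ kills $\gR P^-$ and lands in $\gR P^+$. It therefore factors as a semisimple-module map
\[
\ovi: P^-/\gR P^-\to \gR P^+ .
\]
Decompose $P^\pm=\bop_i P^\pm(i)$ and write $P^\pm(i)\simeq P_i\*_{F_i}V_i^\pm$ up to the usual identification, where $V_i^\pm=\hom_A(P_i,P^\pm)/\rad$ is an $F_i$-space. We have $\gR P^+=\bop_i \gR P_i\*V_i^+$, and $\hom_A(P_j,\gR P_i)=H_{ij}$. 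So the components of $\ovi$ give $F_j$-maps $V_j^-\to \bop_i \hom(H_{ij},\ldots)$. Dually, by adjunction, they give $F_i$-maps $\vi_{ij}:H_{ij}\*_{F_j}V_j^-\to V_i^+$, after choosing the conventions so that the bimodule sides match the definition of a $\Ga$-module; this bookkeeping of sides is the first thing I would fix carefully. I set $\Phi(M)=\bV=(V_i^\pm,\vi_{ij})$.

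Minimality translates exactly into $\bV$ having no simple summands. The condition $\ker\vi\sbe\gR P^-$ says no nonzero vector of $V^-$ is killed by all $\vi_{ij}$, which excludes summands $\bS_{j^-}$. Surjectivity of the presentation onto $M$ together with $\im\vi\sbe\gR P^+$ is automatic. Excluding $\bS_{i^+}$ means the images of the $\vi_{ij}$ span $V_i^+$. This is the subtle point, because $V^+$ comes from the projective cover of $M$, and the images need not span; a summand $\bS_{i^+}$ corresponds to a projective summand $P_i$ of $M$. Since projective summands vanish in $A\Mdp$, I will define $\Phi$ on modules without projective summands, or compose with the projection $\Ga\Mdd\to\Ga\Mdd/\cS\simeq\Ga\Mdd\bb$ of Proposition~\ref{p13}(3). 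The second option is cleaner.

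For morphisms, a map $f:M\to N$ lifts to a morphism of presentations $(f^-,f^+)$. Reducing $f^\pm$ modulo radical gives $F$-linear maps $\al_i^\pm$ forming a $\Ga$-morphism, because $\vi$ is determined by $\ovi$ on the tops. The lift is unique up to a homotopy $h:P^+\to N^-$-part. I must check that the ambiguity, and also morphisms factoring through projectives, produce morphisms in $\cS$. A homotopy changes $(\al^-,\al^+)$ by a pair that factors through the semisimple modules built on the tops, so it lies in $\cS$. A map factoring through a projective $Q$ gives a $\Ga$-morphism factoring through $\Phi(Q)=\bS$-type modules, again in $\cS$. Hence $\Phi:A\Mdp\to\Ga\Mdd/\cS$ is a well-defined additive functor.

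Next I prove that $\Phi$ is full, faithful and dense.

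\emph{Dense.} Given $\bV$ without simple summands, I set $P^\pm=\bop_i P_i\*V_i^\pm$. Because $P^-$ is projective, the map $\ovi$ assembled from the $\vi_{ij}$ lifts to $\vi:P^-\to\gR P^+\sbe P^+$. I take $M=\cok\vi$. Absence of $\bS_{j^-}$ summands gives $\ker\vi\sbe\gR P^-$, so the presentation is minimal and $\Phi(M)\simeq\bV$.

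\emph{Full.} A $\Ga$-morphism $\bal$ lifts componentwise to $f^\pm$ between the projectives with $f^+\vi=\psi f^-$. Here I must check that the lift can be chosen so that the square commutes exactly, not just modulo the radical. The discrepancy $f^+\vi-\psi f^-$ lands in $\gR^2=0$ on the relevant parts, because both composites are determined modulo radical terms that get multiplied into $\gR\cdot\gR$. It then induces a morphism of cokernels.

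\emph{Faithful.} This is the main obstacle. I need that if $\Phi(f)\in\cS$ then $f$ factors through a projective. My plan is to write the $\Ga$-morphism as $\bal=\be\gamma$ through a semisimple module; semisimple means a sum of $\bS_{i^\pm}$. I then lift the pieces through $\bS_{i^+}$ and $\bS_{j^-}$ to maps through $P_i$ and to homotopies, respectively. The map through $\bS_{i^+}$ lifts to a map $M\to P_i\to N$. The map through $\bS_{j^-}$ lifts to a null-homotopy $P^-_M\to P^-_N$ adjustment. Together these should show $f$ factors through a projective. I expect this step to need the most care, using again $\gR^2=0$ to kill the error terms in the lifts.
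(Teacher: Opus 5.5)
Your construction of $\Phi$ on objects, the density argument and the fullness argument are the paper's. Your reason why any lift of the top maps satisfies $f^+\vi=\psi f^-$ is fine. Incidentally, the ambiguity of a lift is not merely in $\cS$ but is zero. A lift of $0$ has $f^+=\psi h$, with image in $\im\psi\sbe\gR Q^+$, and $f^-$ has image in $h\vi(P^-)+\ker\psi\sbe\gR Q^-$. So $\Phi$ is an honest functor $A\Mdd\to\Ga\Mdd$.

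The genuine gap is faithfulness. You only sketch it, and the sketch aims at the wrong thing. In $A\Mdp$ you may take $M,N$ without projective summands, so $\Phi(M),\Phi(N)\in\Ga\Mdd\bb$. Then $\hom(\Phi(M),\bS^+_i)=0=\hom(\bS^-_j,\Phi(N))$: a map to $\bS^+_i$ must vanish on the images of the $\vi_{ij}$, which span $V^+_i$, and a map from $\bS^-_j$ lands in the common kernel of the $\psi_{ij}$, which is $0$. Hence $\Phi(f)\in\cS$ simply means $\Phi(f)=0$, and there are no pieces through $\bS^\pm$ to lift.

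What must be proved is that a morphism whose lift $(f^-,f^+)$ has zero tops factors through a projective. Homotopy adjustments or ``killing error terms'' cannot do this, because such $f$ need not be zero. Take $A=\aK[x,y]/(x,y)^2$ and $f:A/\gR\to A/Ax$, $\bar1\mapsto\bar y$. Its presentations are $A^2\xrightarrow{(x,y)}A$ and $A\xrightarrow{x}A$. One may take $f^+$ to be multiplication by $y$ and $f^-=0$, since $xy=y^2=0$. Then $\Phi(f)=0$, yet $f\ne 0$, and no lift of $f$ is of the form $(\ast,\psi h)$.

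The missing step is the paper's argument. If the top of $f^+$ vanishes, then $f^+(P^+)\sbe\gR Q^+$. Hence $f(M)=\th f^+(P^+)\sbe\th(\gR Q^+)=\gR N$, and $f(\gR M)\sbe\gR^2N=0$. Thus $f$ is the composite of three maps: $M\to M/\gR M$, a morphism $M/\gR M\to\gR N$ of semisimple modules, and the inclusion $\gR N\sbe N$. The map $\th$ restricts to a surjection $\gR Q^+\to\gR N$ of semisimple modules, which splits. So the middle morphism lifts to $\gR Q^+$, and $f$ factors through $Q^+$. In the example, $f$ is $A/\gR\to A\to A/Ax$, $\bar1\mapsto y\mapsto\bar y$.

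With your convention that $V^\pm_i$ are the top multiplicity spaces, $f^+(P^+)\sbe\gR Q^+$ is immediate from $\Phi(f)=0$. The paper writes $V^+_i=\gR P^+(i)$ and so must additionally use $\soc Q^+=\gR Q^+$ when $N$ has no projective summands.
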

  \begin{proof}
 For a module $M\in A\Mdd$, let $\vi:P^-\to P^+$ be its minimal projective presentation. As $\gR^2=0$, it can be identified
 with the induced map $\ovi:P^-/\gR P^-\to\gR P^+$. We define the corresponding $\Ga$-module $\bV=\Phi(M)$ as follows:
  \begin{itemize}
 \item  $V^+_i=\gR P^+(i)$,
 \item  $V^-_j=P^-(j)/\gR P^-(j)$,
 \item  morphisms $\vi_{i^+j^-}: H_{ij}\*_{F_j}V^-_j\to V^+_i$ are induced by $\ovi$. 
 \end{itemize} 
  In what follows we write $\vi_{ij}$ instead of $\vi_{i^+j^-}$.
  
 Note that $\bV$ has no direct summands of the form $\bS^-_j$. Indeed, such a direct summand gives a subspace $V'$
 of some $V^-_j$ such that $\ovi(V')=0$. It means that $\ker\vi\cap P^-(j)\not\sbe \gR P^-(j)$ which is impossible. If $M$
 is projective, $P^-=0$, so $\bV$ is a direct sum of simple modules of the form $\bS^+_i$. Vice versa,
 if $\bV$ has a direct summand of the form $\bS^+_i$, some space $V^+_i$ decomposes as $V'\+V''$, where
 $\im\ovi\cap V^+_i\sbe V''$. Then $P^+(i)=P'\+P''$, where $\im\vi\cap P^+(i)\sbe P''$, hence $M\simeq P'\+M'$. 
 Therefore, $M$ has no projective direct summands \iff $\Phi(M)\in\Ga\Mdd\bb$.
 
 On the contrary, let $\bV=\{V^\pm_i,\vi_{ij}\}$ be a representation of $\Ga$. 
 Set $P^\pm(i)=P_i^{\,d^\pm_i}$, 
 where $d^\pm_i=\dim_{F_i}V^\pm_i$, and $P^\pm=\bop_{i=1}^nP^\pm(i)$.
 Then $\vi_{ij}$ can be considered as a morphism $P^-(j)/\gR P^-(j)\to \gR P^+(i)$, so $\bV$ defines 
 a morphism $\vi:P^-\to P^+$ with $\im\vi\sbe\gR P^+$. If $\bV$ has no direct summands of the form $\bS^-_j$, 
 then $\ker\vi\sbe\gR P^-$, so we obtain a minimal projective presentation of the $A$-module $M=\cok\vi$.
 Obviously, $\Phi(M)\simeq \bV$ and $M$ has projective direct summands  \iff $\bV$ has direct summands isomorphic 
 to $\bS^+_i$. Thus $\Phi$ defines a bijection between isomorphism classes of $A$-modules
 without projective direct summands and $\Ga$-modules from $\Ga\Mdd\bb$.

 Let $\psi:Q^-\to Q^+$ be a minimal projective presentation of $N$ and $\Phi(N)=\{W^-_j,W^+_i,\psi_{ij}\}$. 
 If $f:M\to N$, it has a lift $(\al^+,\al^-)$ to a commutative diagram (of solid arrows)
 \begin{equation}\label{e11} 
  \vcenter{ \xymatrix{  P^- \ar[r]^\vi \ar[d]_{\al^-} & P^+ \ar[d]^{\al^+} \ar@{.>}[dl]_\xi \ar[r]^\pi & M \ar[d]^f \ar[r] & 0 \\
       					  Q^- \ar[r]^\psi 			& Q^+ \ar[r]^\th & N \ar[r] & 0.}} 
 \end{equation}
 Then $\al^\pm$ induce morphisms $\oal^\pm_{i}:V^\pm_i\to W^\pm_i$ which form a morphism of $\Ga$-modules
 $\oal=\Phi(f):\Phi(M)\to\Phi(N)$. If $f=0$, then $\al^+$ factors through $\psi$: $\al^+=\psi\xi$, whence 
 $\al^+(\gR P^+)=0$, so $\psi\al^-=0$ and $\im\al^-\sbe\ker\psi\sbe\gR Q^-$. Therefore, both $\oal^\pm=0$. It implies
 that $\oal$ does not depend on the lift of $\al$, so we can set $\oal=\Phi(f)$. On the contrary, each morphism 
 $\oal=\{\al_i^\pm\}:\bV\to\bW$ defines morphisms $\al^\pm$ which make the left square of the diagram \eqref{e11} 
 commutative, hence define a morphism $f:M\to N$ such that $\Phi(f)=\oal$. So we obtain a full functor 
 $\Phi:A\Mdd\to \Ga\Mdd$. 
 
 Note that if $P^+=U\+P'$, where $U$ is simple, $\im\vi=\ker\pi\sbe P'$, so $M\simeq U\+P'/\ker\pi$. Therefore, if $M$
 has no projective direct summands, $\soc P^+=\gR P^+$. Suppose that $f:M\to N$, where neither $M$ nor $N$ have 
 projective direct summands and $\oal=\Phi(f)=0$. Then, in particular, $\al^+(\gR P^+)=0$, hence 
 $\im\al^+\sbe \soc Q^+=\gR Q^+$. 
 As $\th$ maps $\gR Q^+$ onto $\gR N$, it implies that $\im f=\im\th\al^+\sbe \gR N$, hence $f(\gR M)=0$ and $f$ is actually 
 a morphism $M/\gR M\to \gR N$. As the modules $M/\gR M,\,\gR Q^+$ and $\gR N$ are semisimple and $\gR Q^+\to\gR N$ 
 is surjective, this morphisms factors through $\gR Q^+$, that is $f$ factors through the projective module $Q^+$. 
Therefore, the functor $\Phi$ induces a fully faithful functor $A\Mdp\to\Ga\Mdd\bb$, which we also denote by $\Phi$. 
As we have already known that it is dense, it is an equivalence of categories.
   \end{proof}
   
 We can also consider the category $A\Mdi=A\Mdd/\cI$, where $\cI$ is the ideal of morphisms that factor through
 injective modules, and \emph{minimal injective copresentation} of modules $M$, that is morphisms $\vi:E^-\to E^+$ of
 injective modules such that $M\simeq\ker\vi\spe\soc E^-$ and $\im\vi\spe\soc E^-$. Let $\lst En$ be representatives of
 isomorphism classes of indecomposable injective modules (they are injective envelopes of simple modules).
 Consider the bipartite species $\Ga_*$ such that $\ver\Ga_*^\pm=\{i_\pm\mid 1\le i\le n\}$, 
 $F_{i_\pm}=\End_AE_i/\rad\End_AE_i$ and $H(i_+j_-)=\hom_A(E_j/\soc E_j,\soc E_i)$. Suppose that $A$ is left artinian, 
 that is injective $A$-modules are direct sums of indecomposables \cite[Thm.\,3.46]{Lam}. Then, following the proof of 
 Theorem~\ref{t21}, one can prove the dual result.
  \begin{cthm}{2.1a}\label{t21a} 
  If the ring $A$ is left artinian, there is an equivalence $\Phi_*:A\Mdi\ito\Ga_*\Mdd\bb$.
  \end{cthm}

   \section{Irreducible morphisms}
 \label{s3}
 
  From now on we suppose that $A$ is an \emph{Artin algebra}, that is a finite algebra over a commutative artinian
  ring $K$. We also suppose that $A$ is \emph{connected} (does not split into a direct product of rings), hence 
  we can suppose that $K$
  is local. Let $\gM=\rad K$ and $\aK=K/\gM$. We denote by $A\mdd$ the category of finite (hence artinian) $A$-modules.
  All sets of morphisms $\hom_A(M,N)$, where $M,N\in A\mdd$, are finite $K$-modules. In particular, all endomorphism rings 
  $\End_AM$ are finite $K$-algebras. It implies that endomorphism rings of indecomposable modules are local, hence $A\mdd$
  is a local category, so its species $\Si(A\mdd)$ is defined. It is called the \emph{\AR\ species} $\ars(A)$ of the algebra $A$
  (the \emph{\AR\ quiver} of $A$ if this species is identified with a quiver, for instance, if $\aK$ is algebraically closed).
  All skew fields $F_M$ and all bimodules $\irr(N,M)$ are finite dimentional over $\aK$.
    
     Let $K^*$ denotes the injective envelope of the $K$-module $\aK$. For a $K$-module $M$ set $M^*=\hom_K(M,K^*)$. 
     If $M$ is a left (right) $A$-module, $M^*$ is a right (left) $A$-module and
     the functor $\rD:M\mps M^*$ defines an exact duality between the categories of left and of right finite 
     $A$-modules. Note that it is isomorphic to the functor $\hom_A(M,A^*)$ and we usually identify them. 
     We also define the functor $\vv$ between
    left and right finite modules setting $M\vv=\hom_A(M,A)$. Note that if $A$ is not self-injective, it is not an equivalence, 
    even not exact, though it establishes a duality between the categories of left and right finite projective modules over $A$. 
    Let $P^-\xarr\vi P^+$ be a minimal projective presentation of a non-projective module $M$. We call the \emph{transpose} of
    $M$ and denote by $\tr M$ the right module $\cok\vi\vv$.
    
    Here is a summary of the \AR\ theory which we will use further, see \cite{AR,ARS}.
    We denote $\cM=A\mdd$. A left (right) \emph{$\cM$-module} is, by definition, an additive covariant (contravariant) 
    functor from $\cM$ to the category of abelian groups. 
    
    \begin{thmd}[TD]
    Let $M$ and $N$ be indecomposable finite $A$-modules.
    \begin{enumerate}
    \item The $\cM$-modules $\Rad_A(N,-)$ and $\Rad_A(-,M)$ are finitely generated.
    \\[.5ex]
    {\rm A morphism $\al:N\to X$ (respectively, $\be:X\to M$) is said \emph{left almost split with the source $N$} 
    (respectively, \emph{right \as\ with the target $M$}) if it generates the $\cM$-module $\Rad_A(N,-)$ (respectively, 
    $\Rad_A(-,M)$) and $X$ has no non-trivial decomposition $X=X'\+X''$ such that $\im\al\sbe X'$ 
    (respectively, $\be(X'')=0$).}\!%
   \footnote{\,Actually, in the terms of \cite{ARS} it is the definition of \emph{minimal left (or right) \as\ morphism}, 
   but we never consider non-minimal ones.}
   \vskip.5ex
   \item A left (right) \as\ morphism with prescribed source (target) is unique up to an isomorphism.
   \item\label{td3}   For a morphism $\vi:N\to M$ the following conditions are equivalent:
   		\begin{enumerate}
   		\item $\vi$ is irreducible.
   		\item There is a left \as\ morphism $\al:N\to M\+M'$ such that $\vi=\pi\al$, where $\pi$ is the projection onto $M$.
   		\item There is a right \as\ morphism $\be:N\+N'\to M$ such that $\vi=\be\io$, where $\io$ is the embedding of $N$.
   		\end{enumerate}
   \item \label{td4}  If $M$ is projective, the embedding $\gR M\to M$ is right \as. 
   \item If $N$ is injective, the surjection $N\to N/\soc N$ is left \as.
   \item \label{td6}  For an exact sequence $0\to N\xarr\al X\xarr\be M\to 0$ the following conditions are equivalent:
   		\begin{enumerate}
   		\item  $\al$ is left \as.
   		\item  $\be$ is right \as.
   		\end{enumerate}
   	\vskip.5ex	
   {\rm If these conditions hold, this sequence is called an \emph{\ass\ with the source $N$ and the target $M$}.}
   \vskip.5ex
   \item An \ass\ with a fixed source or a fixed target is unique up to isomorphism.
   \item If $M$ is not projective, there is an \ass\ with the target $M$ and the source $\tau M=\rD\tr M$. \\[.5ex]
   {\rm $\tau M$ is called the \emph{\AR\ transform} of $M$.}
   \vskip.5ex
   \item  If $N$ is not injective, there is an \ass\ with the source $N$ and the target $\tau^{-1}N=\tr\rD N$. \\[.5ex]
   {\rm $\tau^{-1}N$ is called the \emph{inverse \AR\ transform} of $M$.}
    \end{enumerate}
   \rm Obviously, $\tau$ induces a one-to-one correspondence between isomorphism classes of indecomposable
   non-projective and non-injective modules and $\tau^{-1}$ is indeed its inverse.
    \end{thmd}
  
	From now on we suppose that $\gR^2=0$. In this case we can describe irreducible morphisms whose sources or
	targets are projective. We keep the notations of the previous section and denote by $U_i$	the simple module 
	$P_i/\gR P_i$. Let also $h_{ij}=\dim_{F_i}H_{ij}$.
  
   \begin{lem}\label{l21} 
   Let $U$ be a simple non-injective $A$-module. The irreducible morphisms with the source $U$ and indecomposable 
   targets are just all possible embeddings $U\to P_i$, where $P_i$ is not simple.
   \end{lem}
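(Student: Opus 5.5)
The plan is to analyse an arbitrary irreducible morphism $f:U\to Y$ with $Y$ indecomposable by asking where $f(U)$ sits inside $Y$. Since $U$ is simple and $f$ is a radical morphism (in particular not an isomorphism), $f$ cannot be surjective: a surjection from a simple module would be an isomorphism onto a simple $Y$. Hence $f$ is a monomorphism and $f(U)\sbe\soc Y$. Throughout I use that $\gR^2=0$, so $\gR Y\sbe\soc Y$ for every $Y$. The non-injectivity of $U$ only guarantees that irreducible morphisms with source $U$ exist at all (TD(9)); it plays no role in the classification. I then split into two cases according to whether $Y$ is projective.

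\emph{Case 1: $Y=P_i$ is projective.} If $P_i$ is simple, any nonzero $f:U\to P_i$ is an isomorphism, hence not radical. So assume $P_i$ is not simple. Then $\soc P_i=\gR P_i$: otherwise a simple submodule $S\not\sbe\gR P_i$ would map isomorphically onto the simple top $P_i/\gR P_i$, so $S$ would be a direct summand of $P_i$, contradicting indecomposability. Consequently every embedding $U\to P_i$ lands in the semisimple module $\gR P_i$, and its image is a direct summand there, say $\gR P_i=U'\+U''$ with $U'\simeq U$. By TD(\ref{td4}) the inclusion $\gR P_i\to P_i$ is right almost split. Restricting it to the summand $U'$ gives an irreducible morphism by TD(\ref{td3})(c). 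Thus every embedding $U\to P_i$ with $P_i$ non-simple is irreducible; conversely, by the first paragraph, every irreducible $U\to P_i$ is such an embedding.

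\emph{Case 2: $Y$ is indecomposable and not projective.} I show there is no irreducible morphism $U\to Y$. Let $\pi:P^+\to Y$ be a projective cover; it is not split since $Y$ is not projective. There are two subcases.

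Suppose first $f(U)\not\sbe\gR Y$. Then $f(U)\cap\gR Y=0$, and the image of $f(U)$ in the semisimple module $Y/\gR Y$ has a complement. Let $Y'$ be the preimage of that complement in $Y$. Then $Y'\spe\gR Y$, $Y=f(U)\+Y'$, so $Y\simeq U$ by indecomposability and $f$ is an isomorphism, a contradiction.

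Suppose instead $f(U)\sbe\gR Y$. The map $\pi$ restricts to a surjection $\gR P^+\to\gR Y$ of semisimple modules. Hence $f$ lifts to $g:U\to\gR P^+\sbe P^+$ with $f=\pi g$. By the argument at the end of the proof of Theorem~\ref{t21} applied to the minimal presentation, $P^+$ has no simple direct summands (a simple summand of $P^+$ would produce a projective summand of $Y$). So $g$ is not a split monomorphism. Now $f=\pi g$ with $g$ not split mono and $\pi$ not split epi, which contradicts irreducibility.

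The step needing the most care is this last subcase. There one must make sure that the factorization $f=\pi g$ genuinely witnesses $f\in\Rad^2$, i.e.\ that both factors are radical. This is exactly where the facts $\soc P=\gR P$ for non-simple indecomposable projectives and the absence of simple summands in $P^+$ are used.
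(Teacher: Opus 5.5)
Your proof is correct and follows the paper's argument. Irreducibility of the embeddings comes from TD(\ref{td4}) and TD(\ref{td3}); for the converse you use the same dichotomy: if $f(U)\not\sbe\gR Y$ it gives a direct summand, and if $f(U)\sbe\gR Y$ you lift through $\gR P^+\to\gR Y$ and force $\pi$ to split. You also supply details the paper leaves implicit, such as $\soc P_i=\gR P_i$ and why the lift $g$ is radical; note that this last point does not actually need the absence of simple summands in $P^+$, since $g(U)\sbe\gR P^+$ already rules out $g$ being split mono by Nakayama.
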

    \begin{proof}
    All of them are irreducible by TD(\ref{td4}) and TD(\ref{td3}b). Let $U\xarr\al M$ be irreducible and $M$ be indecomposable. 
    We can suppose that $\al$ is an embedding of a submodule $U\sb M$. Then $U\sbe\gR M$, otherwise $U$ is
    a direct summand of $M$. There is an epimorphism $\pi:P\to M$ which induces an epimorphism $\gR P\to\gR M$.
    Therefore, $\gR P$ contains a submodule isomorphic to $U$ and $\al$ factors through $\pi$. As $\al$ is irreducible,
    $\pi$ must split, that is $M$ is projective.
    \end{proof}
   
    \begin{prp}\label{p22} 
    If $P_i$ is not simple, $M$ is indecomposable and $\al:P_i\to M$ is irreducible, then $M$ is not projective.
    \end{prp}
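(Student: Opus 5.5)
The plan is to argue by contradiction: assume $M$ is projective, so $M\simeq P_j$ for some $j$. Then show that $\al$ factors through the simple module $U_i=P_i/\gR P_i$ as a composition of two radical morphisms. That would put $\al$ in $\Rad^2(P_i,P_j)$, which is incompatible with $\al$ being irreducible, i.e.\ nonzero in $\irr(P_i,P_j)=\Rad(P_i,P_j)/\Rad^2(P_i,P_j)$.

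First I check that $\im\al\sbe\gR P_j$. Since $P_j$ is local, $\gR P_j$ is its unique maximal submodule, so otherwise $\al$ is surjective. A surjection onto the projective module $P_j$ splits. As $P_i$ is indecomposable, $\al$ would then be an isomorphism, contradicting $\al\in\Rad(P_i,P_j)$. Next, using $\gR^2=0$, we get
\[
\al(\gR P_i)=\gR\,\al(P_i)\sbe\gR^2P_j=0 .
\]
Hence $\al$ factors as $\al=g\pi$, where $\pi:P_i\to U_i$ is the canonical surjection and $g:U_i\to P_j$ is an embedding with image in $\gR P_j$. The map $g$ is nonzero because $\al\ne0$, so $\gR P_j\ne0$ and $P_j$ is not simple.

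Now both factors are radical morphisms between indecomposable modules. The map $\pi$ is not an isomorphism because $P_i$ is not simple (this is exactly where the hypothesis is used). The map $g$ is not an isomorphism because $P_j$ is not simple. Therefore
\[
\al=g\pi\in\Rad(U_i,P_j)\,\Rad(P_i,U_i)\sbe\Rad^2(P_i,P_j),
\]
so the class of $\al$ in $\irr(P_i,P_j)$ is zero, contradicting irreducibility. Hence $M$ is not projective.

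The only step needing some care is justifying $\im\al\sbe\gR P_j$, which rests on the splitting argument above. Everything after that is a direct consequence of $\gR^2=0$ together with the assumption that $P_i$ is not simple.
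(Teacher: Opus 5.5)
Your proof is correct and follows the same route as the paper's: assuming $M$ projective, $\al$ cannot be surjective, so $\im\al\sbe\gR M$, and with $\gR^2=0$ this forces $\al$ to factor as $P_i\to U_i\to M$. You simply make explicit what the paper leaves implicit, namely the splitting argument ruling out surjectivity, and why both factors are non-invertible (using that $P_i$ is not simple), so that $\al\in\Rad^2$.
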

     \begin{proof}
     Suppose $M$ is projective. Then $\al$ is not an epimorphism. As $\gR M$ is semisimple and is a unique maximal 
     submodule of $M$, $\im\al\simeq U_i\sbe\gR M$. Thus $\al$ factors as $P_i\to U_i\to M$, so is not irreducible.
     \end{proof}
         
 	 For every pair of indices $i,j$ choose a submodule of $\gR P_i$ which is a direct sum $\bop_{k=1}^{h_{ij}}T_{ijk}$,
     where $T_{ijk}\simeq U_j$ for all $k$. Define the morphisms $\th_{ijk}:U_j\to P_i$ such that $\im\th_{ijk}=T_{ijk}$.
     Note that $h_{ij}=0$ if $U_j$ is injective.
     
      \begin{lem}\label{l23}
     Let $U_j$ be non-injective, $\th_j:U_j\to \bop_{i=1}^nP_i^{h_{ij}}$ has the components $\th_{ijk}$ and 
     $\tU_j=\cok\th_j$. Then
     \begin{equation}\label{e21} 
      0\to U_j\xarr{\th_j} \bop_{i=1}^nP_i^{h_{ij}} \xarr{\eta_j} \tU_j \to 0  
     \end{equation}
     is an \ass. In particular, $\tU_j=\tau^{-1} U_j$.\\[.5ex]
     \emph{We  denote by $\eta_{ijk}:P_i\to\tU_j\ (1\le k\le  h_{ij})$ the components of $\eta_j$. }
      \end{lem}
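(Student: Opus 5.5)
I will show that $\th_j$ is left almost split. By TD(\ref{td6}) the sequence \eqref{e21} is then almost split, and by uniqueness of almost split sequences with a fixed source (together with TD) its end term is $\tU_j=\tau^{-1}U_j$.

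First, $\th_j$ is a monomorphism: $U_j$ is simple and $\th_j\ne0$. The latter holds because $U_j$ is non-injective, so some $h_{ij}>0$. Indeed, if all $h_{ij}$ vanished, no $\gR P_i$ would contain a copy of $U_j$. Then every module containing $U_j$ in its radical would be impossible, since $\gR M$ is an image of $\gR P$ for a projective cover $P\to M$. Hence $U_j$ would split off any module containing it, i.e.\ $U_j$ would be injective. Also, each target $P_i$ with $h_{ij}>0$ is non-simple.

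Next, $\th_j$ is radical: no component $\th_{ijk}$ is a split monomorphism, because $\im\th_{ijk}=T_{ijk}\sbe\gR P_i$ and $P_i$ is indecomposable and non-simple.

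The main step is the factorization property. Let $f:U_j\to X$ be a non-split morphism; I must show that $f$ factors through $\th_j$. It suffices to take $X$ indecomposable and $f\ne0$, so $f$ is an embedding with $f(U_j)\not\cong X$ a non-summand. As in Lemma~\ref{l21}, this forces $f(U_j)\sbe\gR X$. Take a projective cover $\pi:P\to X$. It induces an epimorphism of semisimple modules $\gR P\to\gR X$. Since $\gR P$ is semisimple, $f$ lifts to some $g:U_j\to\gR P\sbe P$ with $\pi g=f$.

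It remains to factor $g$ through $\th_j$. Decompose $P=\bigoplus P_i^{m_i}$. Each component of $g$ is a morphism $U_j\to\gR P_i$, i.e.\ an element of $\hom_A(U_j,\gR P_i)$. Via $P_j\to U_j$, this space is identified with $H_{ij}=\hom_A(P_j,\gR P_i)$, a vector space of dimension $h_{ij}$ over $F_j$ acting on the right through $\End U_j=F_j$. The morphisms $\th_{ijk}$, $k=1,\dots,h_{ij}$, form an $F_j$-basis of it: the $T_{ijk}$ form a direct sum, so the $\th_{ijk}$ are independent, and there are $h_{ij}$ of them. Hence each component equals $\sum_k \th_{ijk}\la_k$ with $\la_k\in F_j=\End U_j$. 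I expect this dimension bookkeeping — keeping straight which side $F_j$ acts on, so that $\End U_j$ acts by precomposition — to be the only delicate point.

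I still need to write this as a map through $\th_j$. Each $\th_{ijk}\la_k=\th_{ijk}\circ\la_k$ is precomposition, so I rewrite it as $\mu\circ\th_{ijk}$ for some $\mu\in\End P_i$ lifting $\la_k$. Such $\mu$ exists because $\End P_i\to\End(\gR P_i)$ surjects onto the relevant automorphisms of $T_{ijk}$; alternatively, I bypass this as follows. Choose any morphism from $\bigoplus_iP_i^{h_{ij}}$ to $P$ that sends the summand indexed by $(i,k)$ into $P_i^{m_i}$. On the socle this must realize $\th_{ijk}\mapsto$ the prescribed element. This is possible since $\hom(P_i,\gR P_{i'})$ and multiplication by elements of $\End P_i$ surject onto the needed maps between the semisimple pieces $\gR P_i$.

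Composing this morphism with $\pi$ gives the required factorization of $f$ through $\th_j$.

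Finally, minimality: no proper summand of the middle term contains $\im\th_j$, since the $T_{ijk}$ are independent. Hence $\th_j$ is left almost split.
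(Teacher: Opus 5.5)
Your overall plan is reasonable: you check directly that $\th_j$ is minimal left almost split, lifting $f$ through a projective cover to $g:U_j\to\gR P$. The lifting step is fine; it is essentially the argument of Lemma~\ref{l21}, which the paper combines with TD(3) and TD(6).

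The gap is in factoring $g$ through $\th_j$: you use the wrong side of the bimodule $H_{ij}$. Factorization through $\th_j$ is governed by \emph{postcomposition}, not precomposition.

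\begin{itemize}
\item Take any $\psi:\bop_{i'}P_{i'}^{h_{i'j}}\to P_i$. Its components $P_{i'}\to P_i$ with $i'\ne i$, and the radical endomorphisms of $P_i$, map into $\gR P_i$. Since $\gR^2=0$, they therefore vanish on $\gR P_{i'}\supseteq T_{i'jk}$.
\item Hence the maps $\psi\th_j$ are exactly the left combinations $\sum_k\bar\mu_k\th_{ijk}$ with $\bar\mu_k\in F_i=\End P_i/\rad\End P_i$.
\item So your ``bypass'' through $\hom(P_{i'},\gR P_i)$ contributes nothing. Rewriting $\th_{ijk}\la$ ($\la\in\End U_j$) as $\mu\th_{ijk}$ is impossible in general.
\end{itemize}

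A counterexample: take $A=\mtr{\mathbb C&\mathbb C\\0&\mathbb R}$ over $K=\mathbb R$. Here $U_1=P_1$, $\gR P_2\cong U_1$ and $\End U_1=\mathbb C$. But $\End P_2=\mathbb R$ acts on $\gR P_2$ by real scalars, so $\th_{211}$ precomposed with multiplication by $i$ is not of the form $\mu\th_{211}$. Your count gives a single embedding $U_1\to P_2$, since $H_{21}$ has dimension $1$ over $F_1$. That map is not left almost split: the almost split sequence has middle term $P_2^2$, and $2=\dim_{F_2}H_{21}$. This is why the paper defines $h_{ij}=\dim_{F_i}H_{ij}$. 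You silently replaced it by the right $F_j$-dimension, which is in general a different number.

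The same confusion breaks your minimality step. Independence of the submodules $T_{ijk}$ does not keep $\im\th_j$ out of a proper summand, because $\End P_i$ can move one $T_{ijk}$ onto another. For $A=\mtr{\mathbb R&\mathbb C\\0&\mathbb C}$:
\begin{itemize}
\item $\gR P_2=\mathbb R\oplus\mathbb Ri\cong U_1^2$, and here $h_{21}=\dim_{F_2}H_{21}=1$.
\item Right multiplication by $i$ is an endomorphism of $P_2$ carrying $T_{211}=\mathbb R$ onto $T_{212}=\mathbb Ri$.
\item So the automorphism $(x,y)\mapsto(x,y-xi)$ of $P_2^2$ moves $\im\th_1$ into $P_2\oplus0$.
\end{itemize}

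What is needed is that $\th_{ij1},\dots,\th_{ijh_{ij}}$ form a basis of $H_{ij}$ as a \emph{left} $F_i$-space. This is what the paper's proof actually uses (``every irreducible morphism $U_j\to P_i$ is a linear combination of the $\th_{ijk}$, hence factors through $\th_j$'', cf.\ Corollary~\ref{c24}). In the first example the two $\th_{21k}$ necessarily have the same image, so the description via independent $T_{ijk}$ has to be read in this sense.

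With this choice your proof is easily repaired:
\begin{itemize}
\item Write each component of $g$ as $\sum_k\bar\mu_k\th_{ijk}$ and lift the $\bar\mu_k$ to $\End P_i$. This gives $\psi$ with $\psi\th_j=g$.
\item Deduce minimality from left $F_i$-independence. Projecting the middle term onto an indecomposable summand $P_i$ of a complement of a summand containing $\im\th_j$ yields a nontrivial relation $\sum_k\bar\mu_k\th_{ijk}=0$.
\end{itemize}
When every $F_i$ equals $\aK$ (e.g.\ $\aK$ algebraically closed), the left and right structures coincide and your argument is correct as written.
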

       \begin{proof}
       All components $\th_{ijk}:U_j\to P_i$ of $\th_j$ are irreducible and every irreducible morphism $U_j\to P_i$ is a linear
       combination of components of $\th_{ijk}$, hence factors through $\th_j$. By Lemma~\ref{l21}, all irreducible morphisms
       with the source $U_j$ are components of $\th_j$. Therefore, the claim follows from TD(\ref{td3}) and TD(\ref{td6}).
       \end{proof}
       
     Note that $\th_{ijk}$ can be considered as a map $P_j/\gR P_j\to P_i$ and, if we identify them, they form a basis of
     $H_{ij}$. We denote by $\tH_{ji}$ the space with the basis $\{\eta_{ijk}\}$. 

    \begin{cor}\label{c24} 
    Every irreducible morphism with the source (with the target) $P_i$ is a right (respectively, left) linear combination of 
    $\eta_{ijk}$ (respectively, of $\th_{ijk}$) over $F_i$ for some fixed $j$. 
    \end{cor}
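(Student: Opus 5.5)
The plan is to treat the two halves separately, using TD(\ref{td3}) to reduce each to a description of a minimal almost split morphism. For morphisms with the target $P_i$, TD(\ref{td4}) says the embedding $\gR P_i\to P_i$ is right almost split. Since $\gR^2=0$, $\gR P_i$ is semisimple. By the choice of the submodules $T_{ijk}$ it equals $\bop_j\bop_{k=1}^{h_{ij}}T_{ijk}$; here I should make sure the $T_{ijk}$ were chosen so that $\bop_k T_{ijk}$ is the full $U_j$-isotypic part of $\gR P_i$, which is what $h_{ij}=\dim_{F_i}H_{ij}$ means. So the right almost split morphism is $\bop_{j,k}U_j\to P_i$ with components $\th_{ijk}$.

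By TD(\ref{td3}c), an irreducible morphism $U\to P_i$ with $U$ indecomposable is the restriction of this right almost split morphism to a summand $U$ of $\bop_{j,k}U_j$. Such a summand must be isomorphic to a single $U_j$, sitting diagonally in $U_j^{h_{ij}}$. Its embedding is then $x\mapsto(x\la_1,\dots,x\la_{h_{ij}})$ with $\la_k\in F_j=\End U_j$, suitably written, so the restriction is $\sum_k\th_{ijk}\la_k$. This is a linear combination of the $\th_{ijk}$ for the fixed $j$. I need to be careful about which side the scalars act on, and about whether the statement's ``over $F_i$'' really refers to the endomorphism skew field of the other end. Conversely, every nonzero such combination restricted to a simple summand is irreducible.

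For morphisms with the source $P_i$, suppose $\al:P_i\to M$ is irreducible with $M$ indecomposable. If $P_i$ is simple, say $P_i=U_j$, then Lemma~\ref{l21} gives the answer directly: $\al$ is an embedding into some $P_{i'}$, which is a target rather than a source. The ``source'' half of the statement, read correctly, concerns maps out of $P_i$ and is vacuous unless $P_i$ is non-simple. So assume $P_i$ is not simple. By Proposition~\ref{p22}, $M$ is not projective, so there is an almost split sequence $0\to\tau M\to E\to M\to0$, and $\al$ is, up to the summand projection, a component of the right almost split map $E\to M$.

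The main obstacle is showing that $\tau M$ is simple, i.e.\ $M=\tU_j$ for some $j$, so that Lemma~\ref{l23} applies. I would argue via TD(\ref{td3}b): $\al$ is a component of the left almost split map out of $\tau M$ only if $P_i$ is a summand of $E$. Then $\tau M\to P_i$ is irreducible into a projective, which by the first half forces $\tau M\simeq U_j$. With $M\simeq\tU_j$, the almost split sequence \eqref{e21} shows that the part of $E$ isomorphic to $P_i$ is $P_i^{h_{ij}}$ with components $\eta_{ijk}$. The same summand-restriction argument as before, now with TD(\ref{td3}c) applied to $\eta_j$, then identifies $\al$ as a linear combination $\sum_k\la_k\eta_{ijk}$ with scalars from $F_i$ acting on $P_i$.
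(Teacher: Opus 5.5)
Your overall route is the one the paper intends. The corollary is stated without proof, as a consequence of TD(\ref{td3}), TD(\ref{td4}), Proposition~\ref{p22} and Lemma~\ref{l23}, and your source half is exactly that argument. By TD(\ref{td3}c), $P_i$ is a summand of the middle term of the \ass\ ending at $M$. By TD(\ref{td3}b) the corresponding component $\tau M\to P_i$ is irreducible, hence $\tau M\simeq U_j$ and $M\simeq\tU_j$, and TD(\ref{td3}c) for $\eta_j$ finishes.

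What you leave open is the target half. Applying TD(\ref{td3}c) to $\gR P_i\hookrightarrow P_i$ only gives $\sum_k\th_{ijk}\la_k$ with $\la_k\in F_j$ acting on $U_j$. The statement asks for $\sum_k a_k\th_{ijk}$ with $a_k\in F_i$. Your worry about sides is justified in general. The number of copies of $U_j$ in $\gR P_i$ is the dimension of $H_{ij}$ as a right $F_j$-space, and this can differ from $h_{ij}=\dim_{F_i}H_{ij}$. So your parenthetical identification is valid only when, e.g., all the $F$'s are one central field. Nevertheless, the stated form is reached without changing the skew field. Use your TD(\ref{td4}) argument only to see that the source of an irreducible $\vi$ with target $P_i$ is some $U_j$. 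This $U_j$ is non-injective, since a monomorphism out of an injective module splits. Then apply TD(\ref{td3}b) to the left \as\ morphism $\th_j$ of \eqref{e21}: $\vi=\pi\th_j$ for a split epimorphism $\pi:\bop_{i'}P_{i'}^{h_{i'j}}\to P_i$. The components of $\pi$ on $P_{i'}$ with $i'\ne i$ are radical, so they map into $\gR P_i$ and kill $\gR P_{i'}\supseteq\im\th_{i'jk}$ because $\gR^2=0$. The components on the copies of $P_i$ act on $\gR P_i$ through $F_i$, because $\rad\End_AP_i$ kills $\gR P_i$. Hence $\vi=\sum_k a_k\th_{ijk}$ with $a_k\in F_i$, as stated. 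This is also what the paper's remark that the $\th_{ijk}$ form a basis of $H_{ij}$ amounts to.

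Two smaller corrections. First, if $P_i=U_i$ is simple and non-injective, the source half is not vacuous but false as stated. Lemma~\ref{l21} gives irreducible maps $U_i\to P_{i'}$ with source $P_i$, while there are no $\eta_{ijk}$ since $h_{ij}=0$. So the hypothesis ``$P_i$ not simple'' must be added, which is what you effectively do. Second, in the source half the ``same summand-restriction argument'' is not literally the same as in the semisimple case. The split monomorphism $P_i\to\bop_{i'}P_{i'}^{h_{i'j}}$ may have nonzero radical components into $P_{i'}$ with $i'\ne i$, and its components into the copies of $P_i$ may have radical parts. Both contribute only terms in $\Rad^2(P_i,\tU_j)$. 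So the identification of $\al$ holds modulo $\Rad^2$, which is the only sense in which a combination with coefficients in $F_i$ makes sense here.
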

    
    Certainly, there are dual results concerning injective modules. We denote by $E_i$ the \emph{injective envelope} 
    of $U_i$ and $h'_{ji}=\dim_{F_j}\hom_A(E_i,U_j)$. Then $E_i/\soc E_i\simeq \bop_{j=1}^nU_j^{h'_{ji}}$.
     
     \begin{clem}{3.1a}\label{l31a} 
       Let $U$ be a simple non-projective $A$-module. The irreducible morphisms with the target $U$ and indecomposable 
       sources are just all possible surjections $E_j\to U$, where $E_i$ is not simple.
     \end{clem}
     
        \begin{cprp}{3.2a} \label{p32a} 
    If $E_i$ is not simple, $N$ is indecomposable and $\al:N\to E_i$ is irreducible, then $N$ is not injective.
    \end{cprp}
    
    	 For every pair of indices $i,j$ choose a submodule of $E_i/U_i$ which is a direct sum $\bop_{k=1}^{h'_{ji}}S_{jik}$,
     where $S_{jik}\simeq U_j$ for all $k$. Define the morphisms $\tau_{jik}:E_i\to U_j$ which are compositions 
     $E_i\to S_{jik}\ito U_j$. Note that $h'_{ji}=0$ if $U_j$ is projective.
   
       \begin{clem}{3.3a}\label{l23a} 
     Let $U_j$ be non-projective, $\tau_j: \bop_{i=1}^nE_i^{h'_{ji}}\to U_j$ has the components $\tau_{jik}$ and 
     $\tU'_j=\ker\tau_j$. Then
     \[
      0\to \tU'_j\xarr{\si_j} \bop_{i=1}^nE_i^{h_{ji}} \xarr{\tau_j} U_j \to 0 
     \]
     is an \ass. In particular, $\tU'_j=\tau U_j$.\\[.5ex]
     \emph{We denote by $\si_{jik}:\tU'_j\to E_i$ the components of $\si_j$. }
      \end{clem}
   
      \begin{ccor}{3.4a} \label{c34a} 
    Every irreducible morphism with the source (with the target) $E_i$ is a right (respectively, left) linear combination of 
    $\tau_{ijk}$ (respectively, of $\si_{ijk}$) over $F_i$ for some fixed $j$. 
    \end{ccor}
        
    As for irreducible morphisms whose sources and targets are not projective, we have the following result.
    
     \begin{lem}\label{l25} 
     Let $M,N$ be inecomposable non-projective modules. A morphism $\vi:N\to M$ is irreduciblle in $A\mdd$ \iff 
     $\Phi(\vi)$ is irreducible in $\Ga\mdd$. Therefore, $\Phi$ induces an isomorphism $\irr(M,N)\simeq\irr(\Phi(M),\Phi(N))$.
     \end{lem}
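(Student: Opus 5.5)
The plan is to compare the radical filtrations of the two local categories, using the equivalence $\Phi:A\Mdp\ito\Ga\Mdd\bb$ of Theorem~\ref{t21} restricted to finite modules. Let $\cM'$ be the full subcategory of $A\mdd$ formed by modules without projective summands, and let $\cN'$ be the full subcategory of $\Ga\mdd$ formed by modules without simple summands. For $M,N\in\cM'$, Theorem~\ref{t21} gives
\[
\hom_\Ga(\Phi N,\Phi M)\simeq\hom_A(N,M)/\cP(N,M).
\]
Two preliminary facts are needed:
\begin{itemize}
\item $\Phi(\vi)$ is invertible iff $\vi$ is; this holds because $\cP(N,N)\sbe\Rad(N,N)$ for $N$ indecomposable non-projective.
\item Hence $\Phi$ maps $\Rad_A(N,M)$ onto $\Rad_\Ga(\Phi N,\Phi M)$, and the kernel $\cP(N,M)$ lies in the radical.
\end{itemize}

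For the ``if'' direction, suppose $\vi$ is not irreducible, so $\vi\in\Rad^2_A(N,M)$. Write $\vi=\sum \be_k\al_k$ with $\al_k:N\to X_k$ and $\be_k:X_k\to M$ radical maps and $X_k$ indecomposable. If every $X_k$ is non-projective, applying $\Phi$ places $\Phi(\vi)$ in $\Rad^2$. If some $X_k$ is projective, the term $\be_k\al_k$ factors through a projective, so $\Phi(\be_k\al_k)=0$. Either way $\Phi(\vi)\in\Rad^2_\Ga$, so $\Phi(\vi)$ is not irreducible.

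The ``only if'' direction is the harder one. Suppose $\Phi(\vi)\in\Rad^2_\Ga$. One must check that radical factorizations in $\Ga\mdd$ through simple modules $\bS_i^\pm$ contribute nothing. Write $\Phi(\vi)=\sum\bal_k''\bal_k'$ through indecomposable $\Ga$-modules $\bW_k$.
\begin{itemize}
\item If $\bW_k=\bS^+_i$: since $\bS^+_i$ is projective and simple (Proposition~\ref{p13}), any nonzero map $\bS^+_i\to\Phi(M)$ is a split mono unless $\Phi(M)$ has $\bS^+_i$ as a socle-type image. Rather than analysing this by hand, I would show directly that every morphism in $\cN'$ factoring through a semisimple module already lies in $\Rad^2_{\cN'}$, or corresponds under $\Phi$ to a morphism lying in $\Rad^2_A+\cP$.
\item The concrete route: a morphism $\bV\to\bS^+_i\to\bW$ lifts, via the construction in the proof of Theorem~\ref{t21}, to a morphism $M\to N$ that factors as $M\to U\to N$ with $U$ simple. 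The same holds for $\bS^-_j$, which corresponds to $P_j$ modulo $\gR$.
\item If $U$ is projective, the composite lies in $\cP$. Otherwise $U$ is non-projective and the factorization through $U$ stays in $\cM'$.
\end{itemize}
Mixed products through non-simple $\bW_k$ lift by fullness of $\Phi$ to products of radical maps in $\cM'$, modulo $\cP$. Consequently $\vi\in\Rad^2_A(N,M)+\cP(N,M)$.

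It then remains to show that $\cP(N,M)\sbe\Rad^2_A(N,M)$ for indecomposable non-projective $N$ and $M$. Let $g:N\to P\to M$. Composing with the right almost split map into $P$ given by TD(\ref{td4}), with source $\gR P$ (Corollary~\ref{c24}), the map $N\to P$ is radical and $P\to M$ is radical, so $g\in\Rad^2$. With this, $\vi\in\Rad^2_A$, so $\vi$ is not irreducible.

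The isomorphism $\irr(N,M)\simeq\irr(\Phi N,\Phi M)$ follows: $\Phi$ induces a surjection $\Rad_A\to\Rad_\Ga$ whose restriction to $\Rad^2$ maps onto $\Rad^2$ and whose kernel $\cP$ lies in $\Rad^2$.

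The main obstacle is the third step: controlling factorizations through the simple $\Ga$-modules that were removed from $\Ga\Mdd\bb$, so that $\Rad^2$ computed in $\Ga\mdd$ agrees with $\Rad^2$ computed in $\cN'$.
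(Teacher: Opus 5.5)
\textbf{Gap: the step handling factorizations through simple $\Ga$-modules is not established.}

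Your architecture is the paper's. You transport factorizations along the equivalence $\Phi$ of Theorem~\ref{t21} and observe that terms factoring through projectives die under $\Phi$. The preliminaries, the easy direction and the final deduction for $\irr$ are fine.

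You are more careful than the paper at one point. Lifting a factorization of $\Phi(\vi)$ only gives $\vi\in\Rad^2_A(N,M)+\cP(N,M)$. The paper's ``$\vi=\sum u'_kv'_k$'' tacitly uses $\cP(N,M)\sbe\Rad^2_A(N,M)$, and you verify this. No almost split maps are needed: every component $N\to P_i\to M$ is a composite of maps between non-isomorphic indecomposables, hence of radical maps.

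The gap is the step you flag yourself. Your ``concrete route'' is an unproved assertion. Nothing in the proof of Theorem~\ref{t21} lifts a composite $\Phi(N)\to\bS\to\Phi(M)$ to a map factoring through a simple $A$-module. Also, $\bS^-_j$ is not $\Phi$ of any $A$-module; the simple $U_j$ corresponds to $E^+_j$.

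Even granting the lift, a factorization $N\to U\to M$ through a non-projective simple $U$ lies in $\Rad^2_A(N,M)$ only if both factors are radical. You do not check this, and it is not automatic: if $N\simeq U$, the first factor may be an isomorphism.

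Your first bullet also puts the splitting on the wrong side. A nonzero map from $\bS^+_i$ into $\Phi(M)$ need not split; for the Kronecker quiver, $\bS^+_1\to P^-_1$ is a non-split monomorphism.

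\textbf{The missing observation is one line.} By Proposition~\ref{p13}, $\bS^+_i$ is projective and $\bS^-_j$ is injective. A nonzero map $\Phi(N)\to\bS^+_i$ is necessarily surjective, hence split. A nonzero map $\bS^-_j\to\Phi(M)$ is necessarily injective, hence split. Since $\Phi(N)$ and $\Phi(M)$ have no simple summands, $\hom_\Ga(\Phi(N),\bS^+_i)=0=\hom_\Ga(\bS^-_j,\Phi(M))$.

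So every term $\bal''_k\bal'_k$ with $\bW_k$ simple is already zero; equivalently, the ideal $\cS$ vanishes between objects of $\Ga\Mdd\bb$. After dropping those terms, the remaining ones lift by fullness and density exactly as you describe, and your argument is complete. This fact is exactly what justifies the paper's unexplained sentence ``by Theorem~\ref{t21} we can suppose that $u_k$ and $v_k$ are from $\Ga\mdd\bb$''.
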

      \begin{proof}
      Let $\Phi(\vi)$ be non-irreducible that is $\Phi(\vi)=\sum_k u_kv_k$ for some non-invertible morphisms $u_k,v_k$.
      By Theorem~\ref{t21} we can suppose that $u_k$ and $v_k$ are from $\Ga\mdd\bb$, hence 
      $u_k=\Phi(u'_k),\,v_k=\Phi(v'_k)$ and $\vi=u'_kv'_k$ is also non-irreducible. On the other hand, let $\vi$ be non-irreducible,
      $\vi=\sum_k u_kv_k$ for some non-invertible $u_k$ and $v_k$. If all $u_kv_k$ factor through projective modules, 
      $\Phi(\vi)=0$, so is not irreducible. Otherwise it is the sum of such $\Phi(u_k)\Phi(v_k)$ that $u_kv_k$ do not factor through 
      projective modules, so is non-irreducible.      
      \end{proof}   
      
      For the species $\Ga^*$ we have an analogous result with analogous proof.
      
           \begin{clem}{3.5a}\label{l25a} 
     Let $M,N$ be inecomposable non-injective modules. A morphism $\vi:N\to M$ is irreduciblle in $A\mdd$ \iff 
     $\Phi_*(\vi)$ is irreducible in $\Ga_*\mdd$. Therefore, $\Phi$ induces an isomorphism 
     $\irr(M,N)\simeq\irr(\Phi_*(M),\Phi_*(N))$.
     \end{clem}

\section{Auslander-Reiten species}
 \label{s4} 

 Now we are able to restore the \AR\ species $\ars (A)$ starting from $\ars(\Ga)$. Note that the latter has been described 
 in \cite{Dlab}.
 
        First of all we consider projective and injective modules over the species 	$\Ga$ or, the same, over the algebra
   $A[\Si]$ from \eqref{eq0}. For each bimodule $H_{ij}=\hom_A(P_j,\gR P_i)$ we consider the dual bimodule
   $$H'_{ji}=\hom_\aK(H_{ij},\aK)\simeq \hom_{F_i}(H_{ij},F_i)\simeq \hom_{F_j}(H_{ij},F_j) $$
   (recall that our algebras are finite dimensional over $\aK$).
  We define $\Ga$-modules $P^-_j$ and $E^+_j$ as follows:
    \begin{align*}
     & P^-_j(i^-)= \begin{cases}
       F_j &\text{if } i=j,\\
       0 &\text{otherwise},
      \end{cases}\\
    & P^-_j(i^+)=H_{ij}\ \text{if}\ j\ne i.\\
    & E^+_j(i^+)= \begin{cases}
      F_j &\text{if } i=j,\\
      0 &\text{otherwise},
     \end{cases}\\
     & E^+_j(i^-)=H'_{ij}\ \text{if}\ j\ne i.
    \end{align*}
    
    One easily sees that $E^+_j\simeq \Phi(U_j)$ and $P^-_i\simeq \Phi(\tU_i)$.
 
    \begin{prp}\label{p41}  
     $P^-_j$ is an indecomposable projective $\Ga$-module and every indecomposable projective module is
     isomorphic either to $S^+_j$ or to $P^-_j$ for some $j$.
    \end{prp}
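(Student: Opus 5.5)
Since $\Ga$ is bipartite and $\ver\Ga$ is finite, $\Ga\mdd$ is the category of finite modules over the triangular algebra $\aK[\Ga]$ from \eqref{eq0}. Its indecomposable projectives are therefore exactly the modules $\aK[\Ga]e$, where $e$ runs over the primitive idempotents attached to the vertices. I will compute these directly in the language of representations: the projective cover of $\bS_v$ is the representation ``freely generated'' at the vertex $v$, namely the functor $\bV\mps V_v$ represented by it. Because $\Ga$ has no paths of length greater than one, this representation has only two kinds of nonzero spaces: the space at $v$ itself, which is $F_v$, and the spaces at targets of arrows starting at $v$, which are $H(w,v)\*_{F_v}F_v=H(w,v)$.

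I first take $v=i^+$. Nothing starts at a vertex of $\ver^+$, since $H(x,y)\ne0$ only when $x\in\ver^+$ and $y\in\ver^-$ (the paper's convention $H_{i^+j^-}$). So the representable module at $i^+$ is $\bS_i^+$, in accordance with Proposition~\ref{p13}(2). Next I take $v=j^-$. The free module on one generator $x$ at $j^-$ has $V_{j^-}=F_j$ and $V_{i^+}=H_{ij}\*_{F_j}F_j=H_{ij}$, with structure maps given by the identity of $H_{ij}$. This is precisely $P^-_j$; the restriction $j\ne i$ in the definition is evidently meant to read ``for all $i$'', and I would note this. I then verify the universal property. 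For any $\bW$, a morphism $P^-_j\to\bW$ is determined by the image $w\in W_j^-$ of $1\in F_j$: commutativity of the defining square forces $\al_{i^+}(h)=\psi_{ij}(h\*w)$, and conversely any $w$ defines a morphism. Hence $\hom(P^-_j,-)\simeq(-)_{j^-}$, which is exact, so $P^-_j$ is projective.

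For indecomposability, $\End P^-_j\simeq (P^-_j)_{j^-}=F_j$, which is a skew field and hence local. Alternatively, $P^-_j$ has simple top $\bS^-_j$, because its radical is the $\ver^+$-part $\bop_iH_{ij}$.

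Finally, I show that every indecomposable projective is one of these. A projective $\bP$ is a direct summand of a direct sum of representables, namely of copies of $\bS_i^+$ and $P^-_j$. Since all of these have local endomorphism rings, the Krull--Schmidt property for this local category (or Azumaya's theorem for arbitrary sums) gives that every indecomposable summand of $\bP$ is isomorphic to one of them. The first step, that $\bP$ is a summand of a direct sum of representables, holds because every $\Ga$-module is an epimorphic image of $\bop_v(\text{representable at }v)^{(V_v)}$, which is the standard free cover, and this cover splits when $\bP$ is projective. A quicker route uses projective covers: $\bP/\rad\bP$ is semisimple, hence a sum of $\bS_v$ by Proposition~\ref{p13}(1), and $\bP$ is the projective cover of its top. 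Since the covers of $\bS^+_i$ and $\bS^-_j$ are $\bS^+_i$ and $P^-_j$ respectively, the claim follows.

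The only step needing care is the identification of $P^-_j$ with the representable module at $j^-$, that is, checking the structure maps and the tensor identifications $H_{ij}\*_{F_j}F_j\simeq H_{ij}$. Everything else is a formal consequence of the perfectness and semiperfectness of $\aK[\Ga]$.
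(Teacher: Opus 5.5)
Your proof is correct and follows the paper's own argument, which the paper states only as immediate from the triangular description \eqref{eq0}: the indecomposable projectives are the modules $\aK[\Ga]e_v$, and you correctly identify these as $\bS^+_i$ for $v=i^+$ and as $P^-_j$ for $v=j^-$. You are also right that the restriction ``$j\ne i$'' in the definition of $P^-_j$ must be read as ``for all $i$'', as Example~1 (where $H_{11}\ne0$) confirms.
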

    
     \begin{cprp}{4.1a} \label{p41a}
    $E^+_j$ is an indecomposable injective $\Ga$-module and every indecomposable injective module is
     isomorphic either to $S^-_j$ or to $E^+_j$ for some $j$.
     \end{cprp}
     
	 The proofs of Propositions~\ref{p41} and \ref{p41a} follow immediately from the definitions and the description
      \eqref{eq0} of the algebra $\aK[\Ga]$.
 
  \begin{thm}\label{t42} 
   The \AR\ species $\ars(A)$ is obtained from the \AR\ species $\ars(\Ga)$ by the following procedure:
   \begin{enumerate}
   \item  Delete simple objects $\bS^+_i$ and $\bS^-_i$ and replace every non-simple object $\bV$ by $\Phi^{-1}(\bV)$.
   \item  Add objects $P_i$ with the corresponding skewfields $F_{P_i}=F_i$.
   \item  Define bimodules 
    \begin{align*}
     B(M,P_i)= \begin{cases}
       H_{ij} &\text{if } M=U_j,\\
       0 & \text{otherwise}.
      \end{cases}\\
     B(P_i,M)= \begin{cases}
       \tH_{ji} &\text{if } M=\tU_j,\\
       0 & \text{otherwise}.       
   `   \end{cases}
    \end{align*}
   \end{enumerate}
  \end{thm}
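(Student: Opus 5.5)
The procedure in Theorem~\ref{t42} amounts to three claims, and I would verify each directly.
\begin{itemize}
\item The vertices of $\ars(A)$ are the indecomposable non-projective modules, which correspond bijectively to the non-simple indecomposables of $\Ga\mdd$, together with the $P_i$.
\item The skew fields agree on the non-projective part, and $F_{P_i}=F_i$.
\item The bimodules $\irr$ are as stated: on the non-projective part they are given by $\Phi$, and between a projective and another module they are given by $H_{ij}$ and $\tH_{ji}$.
\end{itemize}

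\emph{Vertices and skew fields.} By Theorem~\ref{t21}, restricted to finite modules, $\Phi$ is a bijection between isomorphism classes of indecomposable non-projective $A$-modules and indecomposable $\Ga$-modules in $\Ga\mdd\bb$. Those $\Ga$-modules are exactly the indecomposables other than $\bS^\pm_i$, by Proposition~\ref{p13}. This gives step (1); adding the $P_i$ gives step (2). For the skew fields, take $M$ indecomposable non-projective. Theorem~\ref{t21} gives $\End(\Phi M)\simeq\End_AM/\cP(M,M)$. Since $M$ is not projective, $\cP(M,M)$ consists of non-invertible endomorphisms, so $\cP(M,M)\sbe\rad\End_AM$. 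Hence $F_M\simeq F_{\Phi M}$ after reducing modulo radicals. For $P_i$ we have $F_{P_i}=F_i$ by definition.

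\emph{Bimodules between non-projective vertices.} These are given by Lemma~\ref{l25}, which yields $\irr(N,M)\simeq\irr(\Phi N,\Phi M)$. I would check that this isomorphism is compatible with the bimodule structures over $F_M$ and $F_N$. That holds because $\Phi$ is a functor and respects composition with endomorphisms.

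\emph{Bimodules involving projectives.} I would use Lemma~\ref{l21}, Proposition~\ref{p22}, Lemma~\ref{l23} and Corollary~\ref{c24}.
\begin{itemize}
\item No irreducible maps go between two projectives. If $P_i$ is not simple, Proposition~\ref{p22} rules out a projective target. If $P_i$ is simple, any non-iso map $P_i\to P_k$ lands in $\gR P_k$ and factors through $P_i\to U$, so there is no irreducible map from it to a non-simple projective. The case where both are simple is trivial.
\item Irreducible maps into $P_i$: by TD(\ref{td4}) these come from summands of $\gR P_i\simeq\bop_jU_j^{h_{ij}}$. So $\irr(U_j,P_i)\simeq\hom_A(U_j,\gR P_i)=H_{ij}$, and $\irr(M,P_i)=0$ for $M$ not simple. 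I would also check that $\irr(U_j,P_i)$ has no $\Rad^2$ part; this holds because $\gR P_i\to P_i$ is right almost split and $\gR P_i$ is semisimple, so the components are independent modulo $\Rad^2$.
\item Irreducible maps out of $P_i$: by TD(\ref{td3}), these are components of the left almost split map with source $P_i$. Corollary~\ref{c24} says they are combinations of the $\eta_{ijk}:P_i\to\tU_j$. The almost split sequence \eqref{e21}, read as a right almost split map onto $\tU_j$, shows that the $\eta_{ijk}$ are $F_i$-independent modulo $\Rad^2$. Hence $\irr(P_i,\tU_j)=\tH_{ji}$, and $\irr(P_i,M)=0$ for $M$ not of the form $\tU_j$.
\end{itemize}
Recalling $B(X,Y)=\irr(Y,X)$ gives the formulas in step (3).

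\emph{Main obstacle.} I expect the hardest step to be matching the bimodule structures exactly, rather than just dimensions. For $\tH_{ji}$ I must check that the $\eta_{ijk}$ give a bimodule isomorphic to the one written, and not merely a space of the same dimension. I would try identifying $\tH_{ji}$ with the dual of $H_{ij}$ via the almost split sequence. A second concern is the degenerate cases where $U_j$ is injective or simple projective, which I would dispose of separately using $h_{ij}=0$.
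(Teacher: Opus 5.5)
Your overall route is the paper's. The paper's proof just cites Lemmata~\ref{l23} and \ref{l25}, and you use Theorem~\ref{t21}, Lemma~\ref{l25} and the projective-module lemmata. Your treatment of vertices, skew fields (via $\mathcal P(M,M)\sbe\rad\End_AM$) and irreducible maps between non-projectives is fine. The gap is in how you handle \emph{simple projective} modules.

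\textbf{Maps between projectives.} Your claim that no irreducible maps go between two projectives is false. Suppose $P_j=U_j$ is simple projective and $U_j\sbe\gR P_i$. Then the inclusion $P_j\to P_i$ is irreducible by TD(\ref{td4}) and TD(\ref{td3}c); this is just Lemma~\ref{l21} applied to $U=P_j$.
\begin{itemize}
\item Your argument fails because, for simple $P_i$, the map $P_i\to U_i$ is an isomorphism, so factoring through it proves nothing.
\item Example: the path algebra of a quiver with one arrow has AR quiver $P_j\to P_i\to U_i$, with $\gR P_i\simeq P_j$.
\item Your closing remark is wrong for the same reason. $h_{ij}=0$ when $U_j$ is injective, but not when $U_j$ is simple projective; it is $h'_{ji}$ that vanishes then.
\end{itemize}

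\textbf{Maps out of $P_i$.} Your bullet on maps out of $P_i$ also fails when $P_i$ is simple and non-injective.
\begin{itemize}
\item In that case $\gR P_i=0$, so there are no $\eta_{ijk}$ at all. The ``source'' half of Corollary~\ref{c24} is only valid for non-simple $P_i$.
\item By Lemma~\ref{l23} applied to $U_i=P_i$, the left almost split map with source $P_i$ is $\theta_i:P_i\to\bigoplus_kP_k^{h_{ki}}$.
\item Hence every irreducible map out of such a $P_i$ goes to a projective, and $\irr(P_i,P_k)\simeq H_{ki}$. Your conclusion ``$\irr(P_i,M)=0$ unless $M\simeq\tU_j$'' is false here.
\end{itemize}

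\textbf{Fix.} Split into two cases.
\begin{itemize}
\item For non-simple $P_i$, your argument via Proposition~\ref{p22} and Corollary~\ref{c24} stands.
\item For simple $P_i$, use Lemma~\ref{l23} directly.
\end{itemize}
The arrows between projectives that this produces are exactly what step~(3) prescribes by $B=H_{ij}$ for $M=U_j$, once $U_j$ may itself be some $P_j$. Read literally, the two ``otherwise $0$'' clauses conflict on such a pair, so you must read the statement this way rather than prove that no such arrows exist.

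\textbf{Convention.} Step~(3) is written as $\irr(M,P_i)$ and $\irr(P_i,M)$: $H_{ij}$ consists of maps $U_j\to P_i$, and $\tH_{ji}$ of maps $P_i\to\tU_j$. So ``recalling $B(X,Y)=\irr(Y,X)$'' transposes the formulas; pick one convention and use it consistently.
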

   \begin{proof}
    It follows immediately from Lemmata~\ref{l23} and \ref{l25}.
   \end{proof}
 
   \begin{exm}
    {\bf1.}~Let $A=\aK[x,y]/(x,y)^2$. Then $\Ga$ is the Kronecker quiver $\xymatrix{ 1^- \ar@/^/[r] \ar@/_/[r] & 1^+}$.
    $\ars(\Ga)$ consists of the preprojective component
   \[
     \xymatrix{  S^+_1  \ar@/^/[r] \ar@/_/[r] & P^-_1 \ar@/^/[r] \ar@/_/[r] &  X_1 \ar@/^/[r] \ar@/_/[r] & *+\txt{\dots}  },
   \]
   preinjective component
   \[
     \xymatrix{ *+{\dots} \ar@/^/[r] \ar@/_/[r] & Y_1 \ar@/^/[r] \ar@/_/[r] & E^+_1 \ar@/^/[r] \ar@/_/[r] & S^-_1      }
   \]
   and regular components which are homogeneous tubes. By Theorem~\ref{t42}, $\ars(A)$ consists
   of the same regular components and
   the component obtained by gluing the preinjective and the preprojective components with the projective
   module $A$:
   \[
     \xymatrix{ *+{\dots} \ar@/^/[r] \ar@/_/[r] & M \ar@/^/[r] \ar@/_/[r] & U \ar@/^/[r] \ar@/_/[r] & A 
       \ar@/^/[r] \ar@/_/[r] & \tU \ar@/^/[r] \ar@/_/[r] &  N \ar@/^/[r] \ar@/_/[r] & *+{\dots} }
   \]
   Note that $M\simeq A^*$ by Lemma~\ref{l31a} and $N\simeq\tau^{-1}A$ since $\tU=\tau^{-1}U$.
 
   \medskip
   {\bf2.}~Let now $A$ be given by the quiver
   \[
     \xymatrix{ 1 \ar[rr]|{\,a_1\,} \ar@/^1em/[rr]|{\,a_2\,} && 2 \ar@/^1em/[ll]|{\,b\,}  }     
   \]
    with relations $a_ib=ba_i=0\ (i=1,2)$. Then $\Ga$ consists of two components
    \[
      \xymatrix{ 1^- \ar[r] & 2^+} \ \text{ \rm and } \xymatrix{ 2^- \ar@/^/[r] \ar@/_/[r] & 1^+}.
    \]
  The second component is the Kronecker quiver, so the corresponding \AR\ components are as above,  just replace there
  $1^-$ by $2^-$. The \AR\ quiver of the first component is 
  \[
   S^+_2 \longrightarrow P_1^- \longrightarrow S_1^- \quad (P_1^-=E_2^+).
  \]
  Hence $\ars(A)$ consists of regular components, as those of the Kronecker quiver, and one component
  obtained by gluing the remaining three components of $\ars(\Ga)$ with the projective modules $P_1$ and $P_2$:
  {\footnotesize
  \[  
    \xymatrix{*+{\dots} \ar@/^/[r] \ar@/_/[r] & E_2 \ar@/^/[r] \ar@/_/[r] & U_1 \ar[r] & P_2 \ar[r] & U_2 \ar@/^/[r] \ar@/_/[r]
   & P_1 \ar@/^/[r] \ar@/_/[r] & \tU_2 \ar@/^/[r] \ar@/_/[r] & {\phantom{.}\tau^{-1}P_1} \ar@/^/[r] \ar@/_/[r] & *+{\dots} }
  \]}
  Note that $P_2\simeq E_1$ and $U_2\simeq \tU_1$.
   \end{exm}
   
   Certainly, one can obtain an analogous result using the quiver $\Ga_*$, Theorem~\ref{t21a} and 
   Lemmata~\ref{l23a} and \ref{l25a}. We leave the formulation of it to the reader.

\bibliographystyle{acm}
\bibliography{R^2.bib}

\end{document}